\numberwithin{equation}{section}
\newcommand{\Z}{{\mathbb Z}}
\newcommand{\R}{{\mathbb R}}
\renewcommand{\phi}{\varphi}
\newcommand{\IE}{\mathbb{E}}
\newcommand{\IP}{\mathbb{P}}
\newcommand{\Var}{\mathop{\mathrm{Var}}}
\newcommand{\dist}{\mathop{\mathrm{dist}}}
\newcommand{\diam}{\mathop{\mathrm{diam}}}
\newcommand{\I}[1]{{\mathds{1}}_{#1}}
\newtheorem{theo}{Theorem}[section]
\newtheorem{prop}[theo]{Proposition}
\newtheorem{cor}[theo]{Corollary}
\newtheorem{rmk}[theo]{Remark}
\title{On decoupling inequalities and percolation of
excursion sets of the Gaussian free field}
\author{Serguei~Popov$^{1}$ \and Bal\'azs R\'ath$^{2}$}
\begin{document}

\maketitle

{\footnotesize 
\noindent $^{~1}$Department of Statistics, Institute of Mathematics,
 Statistics and Scientific Computation, University of Campinas --
UNICAMP, rua S\'ergio Buarque de Holanda 651,
13083--859, Campinas SP, Brazil\\
\noindent e-mail: \texttt{popov@ime.unicamp.br}

\noindent $^{~2}$Budapest University of Technology, Institute of Mathematics,
MTA-BME Stochastics Research Group, 1 Egry J\'ozsef u., 1111 Budapest, Hungary.\\
\noindent e-mail: \texttt{rathb@math.bme.hu}
}

\begin{abstract}
We prove decoupling inequalities for the Gaussian free field on~$\Z^d$, $d\geq 3$.
As an application, we obtain  exponential decay 
(with logarithmic correction
for $d=3$) of the connectivity function of excursion
sets for large values of the threshold.
\\[.2cm]\textbf{Keywords:} percolation, Gaussian free field,
simple random walk, random interlacements
\\[.2cm]\textbf{AMS 2000 subject classifications:}
Primary 60K35, 82B43. 
\end{abstract}

\section{Correlation and decoupling inequalities}
\label{s_decoupling}
Let us denote by $\|x\|$ the Euclidean norm of $x\in\Z^d$.
The Gaussian free field (GFF) on $\Z^d$, $d\geq 3$,
 is a centered Gaussian 
field $\phi = (\phi_x)_{x\in\Z^d}\in\R^{\Z^d}$
under  the  probability measure $\mathbb P$ with covariance 
$\IE (\phi_x\phi_y) = g(x,y)$ 
for all $x,y\in\Z^d$,
where $g(\cdot,\cdot)$ 
denotes the Green function of 
the simple random walk on $\Z^d$. 
The random field~$\phi$ exhibits long-range correlations, 
since
\begin{equation}\label{lawler_green}
c_g \Vert x-y \Vert^{2-d} \leq  g(x,y) \leq C_g \Vert x-y \Vert^{2-d}, \;\; x \neq y 
\end{equation}
for some $0<c_g(d) \leq C_g(d) <\infty$,  see \cite[Theorem 1.5.4]{lawler}.

The goal of this section is to quantify the dependence between configurations
of the GFF, supported on disjoint 
(and, usually, distant)
sets.  Decoupling inequalities of this kind
 are useful tools in the study of percolation of 
the excursion sets of the GFF 
 (see~\cite[Proposition 2.2]{RodSz} 
and~\cite[Lemma 2.6]{DreRathSap}) and the vacant set of 
 random interlacements (see~\cite[Theorem 2.6]{Sznitman:Decoupling}). 
When starting to work on the subject of the present paper,  
our initial aim was to find the GFF counterpart of the decoupling inequalities proved for 
 random interlacements in~\cite[Theorem 1.1]{PopovTeixeira} (see 
 Remark~\ref{remark_similarities} for further discussion), but our
 Theorem~\ref{t_cond} actually gives a stronger, 
conditional form of decoupling inequalities for events defined in 
terms of the GFF. In Section~\ref{s_connect} we  give an application
 of the decoupling inequalities stated in this section.
 
\medskip 
 If $K, K' \subseteq \Z^d$, we define 
 \[
\dist(K,K')=\min_{x\in K,\, y\in K'}\|x-y\|,
\qquad  \diam(K)=\sup_{x,y \in K} \|x-y\|.
 \] 

 With a slight abuse of notation, for $K\subset\Z^d$ we
write $\phi_K:=(\phi_x)_{x\in K}$ for the GFF
restricted to~$K$. 
We say that a (measurable) function~$f: \R^{\Z^d} \rightarrow \R$
is supported on~$K$ if for any $\eta,\eta'\in\R^{\Z^d}$ 
such that $\eta_K=\eta'_K$ it holds that $f(\eta)=f(\eta')$.
Let us also define 
\[
 \mathrm{Cov}\big(f_1(\varphi), f_2(\varphi)\big)
=\IE\big(f_1(\varphi) f_2(\varphi)\big)
 - \IE f_1(\varphi) \IE f_2(\varphi).
\]

If~$K$ is a finite subset of $\Z^d$, we define the capacity of~$K$ by the formula
\[
 \mathrm{cap}(K) = \sum_{x \in K} P_x[ \widetilde{H}_K = +\infty], 
\]
where $P_x$ is the law of simple random walk~$X$ on $\Z^d$ 
started from $x \in \Z^d$ and 
$\widetilde{H}_K = \min\{n\geq 1 : X_n\in K\}$ 
is the hitting time of the set~$K$.

First, we formulate the following result about correlations
of functions supported on disjoint finite sets,
which is in the spirit of the basic correlation inequality
for random interlacements, see \cite[(2.15)]{SznitmanAM}.
\begin{prop} 
\label{p_simple}
Let $d \geq 3$. 
There exist constants $0<c_d \leq C_d < +\infty$ 
that depend only on $d$ such that if
$K_1, K_2$ are disjoint finite subsets of~$\Z^d$ and 
\begin{equation}\label{K_1_K_1_far}
 \dist(K_1,K_2) \geq \max \{ \diam(K_1), \diam(K_2) \}, 
\end{equation}
then
\begin{equation}\label{our_corr_ineq}
 c_d 
\frac{\big(\mathrm{cap}(K_1)\mathrm{cap}(K_2)\big)^{1/2}}{\dist(K_1,K_2)^{d-2}} \leq
\sup_{f_1, f_2} \mathrm{Cov}\big(f_1(\varphi), f_2(\varphi)\big) 
\leq C_d 
\frac{ \big(\mathrm{cap}(K_1) \mathrm{cap}(K_2)\big)^{1/2}}{ \dist(K_1,K_2)^{d-2} },
\end{equation}
where the supremum is taken over $[0,1]$-valued functions $f_1, f_2: \R^{\Z^d} \rightarrow [0,1] $, 
and where $f_1$ is supported on $K_1$ and $f_2$ is supported on $K_2$. 
\end{prop}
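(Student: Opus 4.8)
\emph{The plan is to prove the two inequalities separately, using the same dictionary between the Gaussian field and the random walk capacities.}

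For the \textbf{lower bound} I would test the covariance against the explicit linear functionals weighted by the equilibrium measure. Writing $e_{K_i}(x)=P_x[\widetilde{H}_{K_i}=\infty]$ and $L_i=\sum_{x\in K_i}e_{K_i}(x)\varphi_x$, the relation $\sum_{x'\in K_i}g(x,x')e_{K_i}(x')=P_x[H_{K_i}<\infty]=1$ for $x\in K_i$ gives $\IE(L_i^2)=\mathrm{cap}(K_i)$, while \eqref{lawler_green} together with \eqref{K_1_K_1_far} (which forces $\|x-y\|\le 3\,\dist(K_1,K_2)$ for $x\in K_1,\,y\in K_2$) yields $\IE(L_1L_2)\ge c_g\,3^{2-d}\,\dist(K_1,K_2)^{2-d}\,\mathrm{cap}(K_1)\mathrm{cap}(K_2)$. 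Hence the correlation coefficient $\rho$ of the centered bivariate Gaussian $(L_1,L_2)$ is nonnegative and at least $c_g\,3^{2-d}\,\dist(K_1,K_2)^{2-d}(\mathrm{cap}(K_1)\mathrm{cap}(K_2))^{1/2}$. Taking $f_i=\1{L_i>0}$, a $\{0,1\}$-valued function supported on $K_i$, and invoking the orthant identity $\mathrm{Cov}(f_1,f_2)=\tfrac{1}{2\pi}\arcsin\rho\ge\tfrac{\rho}{2\pi}$ produces the claimed lower bound.

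For the \textbf{upper bound} I would first reduce to a single function. Since $f_1$ ranges over all $[0,1]$-valued functions of $\varphi_{K_1}$,
\[
\sup_{f_1}\mathrm{Cov}(f_1,f_2)=\IE\big(\IE[f_2\mid\varphi_{K_1}]-\IE f_2\big)_+=\tfrac12\,\IE\big|\IE[f_2\mid\varphi_{K_1}]-\IE f_2\big|.
\]
Next I would use the domain Markov property to write $\varphi_{K_2}=h_{K_2}+\psi_{K_2}$, where $h_y=\sum_{x\in K_1}\mathrm{hm}_{K_1}(y,x)\varphi_x$ is $\varphi_{K_1}$-measurable and $\psi_{K_2}$ is an independent centered Gaussian with the conditional covariance $\Sigma$. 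Then $\IE[f_2\mid\varphi_{K_1}]=G(h_{K_2})$ with $G=f_2*\mathcal N(0,\Sigma)$, and since $f_2\in[0,1]$ the increment $|G(v)-G(v')|$ is at most the total variation distance between two Gaussians differing only by the shift $v-v'$, which Pinsker's inequality bounds by $C\|\Sigma^{-1/2}(v-v')\|$. Averaging over two independent copies of $h_{K_2}$ (with covariance $\Gamma$) and Cauchy--Schwarz then give $\sup_{f_1,f_2}\mathrm{Cov}(f_1,f_2)\le C\big(\mathrm{tr}(\Sigma^{-1}\Gamma)\big)^{1/2}$.

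The remaining, and main, task is the capacity estimate $\mathrm{tr}(\Sigma^{-1}\Gamma)\le C\,\dist(K_1,K_2)^{2(2-d)}\mathrm{cap}(K_1)\mathrm{cap}(K_2)$. Denoting by $G_{11},G_{22},G_{12}$ the Green matrices of $\varphi$ on $K_1,K_2$, one has $\Gamma=G_{21}G_{11}^{-1}G_{12}$ and $\Sigma=G_{22}-\Gamma$; since $\mathrm{Cov}(f_1,f_2)\le\tfrac14$ always, the case where $\dist^{2(2-d)}\mathrm{cap}(K_1)\mathrm{cap}(K_2)$ is large is trivial, so one may assume it small, whence $\Gamma\preceq\tfrac12 G_{22}$ and $\Sigma^{-1}\preceq 2G_{22}^{-1}$, reducing matters to $\mathrm{tr}(G_{22}^{-1}\Gamma)$. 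Using the first-passage decomposition $g(y,x)=\sum_{x'\in K_1}\mathrm{hm}_{K_1}(y,x')\,g(x',x)$ and its $K_2$-analogue, I would establish the clean identity
\[
\mathrm{tr}(G_{22}^{-1}\Gamma)=\sum_{x\in K_1,\,y\in K_2}\mathrm{hm}_{K_2}(x,y)\,\mathrm{hm}_{K_1}(y,x),
\]
where $\mathrm{hm}_K(z,\cdot)$ is the hitting distribution of $K$ from $z$. I expect the evaluation of this sum to be the crux: a naive bound on the right-hand side produces factors $|K_1|,|K_2|$ instead of capacities. To recover capacities I would use that, because $\dist(K_1,K_2)\ge\max(\diam K_1,\diam K_2)$, each set is seen ``from afar'' by the other, so its hitting distribution is comparable to the normalized equilibrium measure, $\mathrm{hm}_{K_1}(y,x)\le C\,e_{K_1}(x)\,P_y[H_{K_1}<\infty]/\mathrm{cap}(K_1)$ and symmetrically; substituting this factorizes the double sum, after which $P_y[H_{K_1}<\infty]\le C\dist^{2-d}\mathrm{cap}(K_1)$ and $\sum_{x\in K_1}e_{K_1}(x)=\mathrm{cap}(K_1)$ (with the $K_2$-analogues) deliver the estimate. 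The \textbf{main obstacle} is precisely this harmonic-measure/equilibrium-measure comparison, which is what keeps the bound from degrading to set cardinalities.
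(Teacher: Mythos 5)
Your proposal is correct in outline but follows a genuinely different route from the paper, especially for the upper bound. The paper works entirely inside the Gaussian Hilbert space generated by $\varphi$: by \cite[Theorems 10.11 and 10.13]{Janson}, both the maximal correlation coefficient and the strong mixing coefficient of $\sigma(\varphi_{K_1})$ and $\sigma(\varphi_{K_2})$ are comparable (up to the universal factor $\frac{1}{2\pi}$) to the operator norm $\Vert P_{\mathbf{HK}}\Vert$ of the cross\-/covariance between the two spaces of \emph{linear} functionals; this single quantity is then computed via Perron--Frobenius (to reduce to nonnegative probability vectors) together with the variational characterization of capacity. Your lower bound is essentially the same computation made explicit by hand: testing against the equilibrium\-/measure functionals $L_i$ is exactly the optimal choice in the paper's variational formula, and Sheppard's $\frac{1}{2\pi}\arcsin\rho$ identity is in substance how Janson's $\alpha\geq\frac{1}{2\pi}\rho$ is proved; all your identities ($\IE L_i^2=\mathrm{cap}(K_i)$, the $3\,\dist$ bound, $\arcsin\rho\geq\rho$) check out. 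Your upper bound deliberately avoids the (nontrivial) theorem that Gaussian maximal correlation is attained on linear functionals, and instead controls $\IE\bigl|\IE[f_2\mid\varphi_{K_1}]-\IE f_2\bigr|$ via Pinsker and a trace computation. This is harder: you must bound the \emph{sum} of all squared canonical correlations, $\mathrm{tr}(G_{22}^{-1}\Gamma)=\sum_{x,y}\mathrm{hm}_{K_1}(y,x)\,\mathrm{hm}_{K_2}(x,y)$ (an identity which I verified is correct), rather than just the largest one, and this forces you to invoke the comparison of hitting distributions with normalized equilibrium measure. What your route buys is self-containedness modulo Lawler's book; what the paper's route buys is that the whole upper bound collapses to a one-line estimate of a single bilinear form.

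Two points need repair. First, the step ``assume $\dist^{2(2-d)}\mathrm{cap}(K_1)\mathrm{cap}(K_2)$ is small, whence $\Gamma\preceq\tfrac12 G_{22}$'' is circular as written: the implication requires knowing that the top canonical correlation is bounded by that capacity expression, which is (the operator-norm version of) what you are proving. The fix is to reorder: prove the unconditional bound $\mathrm{tr}(G_{22}^{-1}\Gamma)\leq C\,\dist^{2(2-d)}\mathrm{cap}(K_1)\mathrm{cap}(K_2)$ first; since the largest eigenvalue of the positive semidefinite matrix $G_{22}^{-1/2}\Gamma G_{22}^{-1/2}$ is at most its trace, in the nontrivial regime this \emph{then} yields $\Gamma\preceq\tfrac12 G_{22}$ and hence $\mathrm{tr}(\Sigma^{-1}\Gamma)\leq 2\,\mathrm{tr}(G_{22}^{-1}\Gamma)$. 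Second, the harmonic-measure bound $\mathrm{hm}_{K_1}(y,x)\leq C\,e_{K_1}(x)P_y[H_{K_1}<\infty]/\mathrm{cap}(K_1)$ for $\dist(y,K_1)\geq\diam(K_1)$, which you correctly identify as the crux, is asserted rather than proved; it is a true and standard fact (a quantitative version of ``harmonic measure from afar equals normalized equilibrium measure'', cf.\ \cite{lawler}), and only the upper-bound direction is needed, but the proposal is incomplete without either a proof or a precise citation, and one should check that the cited form applies already at separation equal to one diameter rather than a large multiple of it.
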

Let us remark that the assumption \eqref{K_1_K_1_far} is
 only used in the proof the lower bound of \eqref{our_corr_ineq}.

While the above result does indeed give the right order
of decay of correlations, it is not always the right tool one looks for.
The reason is that the covariance decreases polynomially
in distance, which makes renormalization arguments more difficult.
One can circumvent this problem by using the method of \emph{sprinkling}, which 
has been effectively applied to produce powerful decoupling inequalities
for the excursion sets of the GFF 
 (see~\cite[Proposition 2.2]{RodSz} 
and~\cite[Lemma 2.6]{DreRathSap}) and the vacant set of 
 random interlacements (see~\cite[Section 2]{Sznitman:Decoupling}).

To explain this approach, we need more definitions.
Write $\eta\leq \eta'$ if $\eta_x\leq \eta'_x$ for all $x\in\Z^d$.
A function~$f: \R^{\Z^d} \rightarrow \R$ is called \emph{increasing}
if $\eta\leq \eta'$ implies $f(\eta)\leq f(\eta')$,
and \emph{decreasing} if $(-f)$ is increasing. 
For $\eta\in \R^{\Z^d}$ and $a\in\R$ we use the shorthand
$\eta+a$ for the configuration defined by 
$(\eta+a)_x=\eta_x+a$, $x\in\Z^d$. 
 
 Let us fix two disjoint sets $K_1,K_2\subset \Z^d$, such that $K_1$ is finite.
 It is known (see~\cite[Proposition 2.3]{Sznitman:Topics} 
and~\cite[Lemma 1.2]{RodSz}) 
that $\mathbb{P}$-a.s.\ there exists a  decomposition 
 \begin{equation}\label{decomposition}
 \phi={\tilde \phi}+h
 \end{equation}
  into a
sum of independent Gaussian fields, 
 where~${\tilde \phi}$
is a centered field such that ${\tilde \phi}_{K_1}\equiv 0$, 
and 
\begin{equation}\label{def_eq_h}
  h_x = \sum_{y \in K_1} 
P_x[ H_{ K_1} < \infty, \,  X_{H_{K_1}} =y] \cdot \phi_y, \qquad    \text{$\mathbb{P}$-a.s.},
\end{equation}
 where   $H_{K_1} = \min\{n\geq 0 : X_n \in K_1\}$ 
is the entrance time of the random walk to the set~$K_1$.
Note that~$h$ is measurable with 
respect to the sigma-algebra generated by~$\phi_{K_1}$ and that one has $h_{K_1} \equiv \phi_{K_1}$.

For $\delta>0$, define the $\sigma(\phi_{K_1})$-measurable event
\begin{equation}
\label{def_G}
 G_\delta = \Big\{\sup_{x\in K_2}|h_x|\leq \frac{\delta}{2}\Big\}.
\end{equation}

Our main result is about
 the \emph{conditional} decoupling
inequalities:
\begin{theo}
\label{t_cond}
Assume that $f_2:\R^{\Z^d}\rightarrow [0,1]$ is increasing and
supported on~$K_2$. For all $\delta>0 $ it
$\mathbb{P}$-a.s.\ holds that 
\begin{equation}
\label{cond_incr}
 \big( \IE(f_2(\phi-\delta))-\IP[G_\delta^c]\big)\I{G_\delta} \leq 
 \IE\big(f_2(\phi)\mid \phi_{K_1}\big)\I{G_\delta}  \leq
 \big(\IE(f_2(\phi+\delta))+\IP[G_\delta^c]\big)\I{G_\delta}.
\end{equation}
\end{theo}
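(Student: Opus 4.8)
The plan is to use the decomposition~\eqref{decomposition} to turn the conditional expectation into an ordinary integral over the independent field~$\tilde\phi$, and then to exploit the monotonicity of~$f_2$ together with the bound $|h_x|\le\delta/2$ that holds on~$G_\delta$. To this end I would first introduce, for a deterministic configuration $\eta\in\R^{\Z^d}$, the quantity
\[
F(\eta):=\IE\big(f_2(\tilde\phi+\eta)\big).
\]
Since $f_2$ is supported on~$K_2$, the value $F(\eta)$ depends only on the coordinates~$\eta_{K_2}$; and since $f_2$ is increasing, $\eta\le\eta'$ implies $\tilde\phi+\eta\le\tilde\phi+\eta'$ pointwise, so $F$ is increasing as well.

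The structural heart of the argument is the identity
\[
\IE\big(f_2(\phi)\mid\phi_{K_1}\big)=F(h)\qquad\mathbb P\text{-a.s.}
\]
To justify it I would observe that $h_{K_1}\equiv\phi_{K_1}$ gives $\sigma(\phi_{K_1})=\sigma(h_{K_1})\subseteq\sigma(h)$, so the independence of $\tilde\phi$ and~$h$ in~\eqref{decomposition} implies that $\tilde\phi$ is independent of~$\sigma(\phi_{K_1})$, while $h$ is $\sigma(\phi_{K_1})$-measurable. Applying the standard fact that conditioning on a sigma-algebra freezes the measurable argument and integrates out the independent one, to $f_2(\phi)=f_2(\tilde\phi+h)$, yields the identity. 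The same reasoning applied to the shifted fields, using that $h\pm\delta$ is again $\sigma(\phi_{K_1})$-measurable, gives $\IE\big(f_2(\phi\pm\delta)\big)=\IE\big(F(h\pm\delta)\big)$.

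It then remains to carry out the sprinkling comparison. Writing $c$ for the configuration identically equal to $\tfrac{\delta}{2}$, on $G_\delta$ one has $-c\le h\le c$ on $K_2$, so, using that $F$ is increasing and depends only on the $K_2$-coordinates,
\[
F(-c)\,\I{G_\delta}\le F(h)\,\I{G_\delta}\le F(c)\,\I{G_\delta}.
\]
For the right-hand inequality of~\eqref{cond_incr} it suffices to show $F(c)\le\IE(f_2(\phi+\delta))+\IP[G_\delta^c]$. I would get this by restricting the expectation to $G_\delta$, where $(h+\delta)_{K_2}\ge\tfrac{\delta}{2}$ forces $F(h+\delta)\ge F(c)$, whence
\[
\IE\big(f_2(\phi+\delta)\big)\ge\IE\big(F(h+\delta)\,\I{G_\delta}\big)\ge F(c)\,\IP[G_\delta]\ge F(c)-\IP[G_\delta^c],
\]
the last step using $0\le F(c)\le 1$. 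Combining the two displays proves the upper bound. The lower bound is symmetric: on $G_\delta$ one has $(h-\delta)_{K_2}\le-\tfrac{\delta}{2}$, so $F(h-\delta)\le F(-c)$ there and $F(h-\delta)\le 1$ everywhere, giving $\IE(f_2(\phi-\delta))=\IE(F(h-\delta))\le F(-c)+\IP[G_\delta^c]$, which together with $F(-c)\,\I{G_\delta}\le F(h)\,\I{G_\delta}$ yields the left-hand inequality of~\eqref{cond_incr}.

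I expect the only genuinely delicate point to be the justification of the identity $\IE(f_2(\phi)\mid\phi_{K_1})=F(h)$, where one must verify carefully that conditioning on~$\phi_{K_1}$ freezes~$h$ while leaving~$\tilde\phi$ distributed according to its unconditional law; this is precisely where the independence in~\eqref{decomposition} and the measurability relation $\sigma(\phi_{K_1})\subseteq\sigma(h)$ are used. Everything after that is monotonicity bookkeeping, and the choice of the threshold $\delta/2$ in the definition of~$G_\delta$ is exactly what allows the shift by~$\delta$ to absorb the fluctuations of~$h$ on~$K_2$.
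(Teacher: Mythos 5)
Your proof is correct, and it reaches the inequality~\eqref{cond_incr} by a mechanism that differs from the paper's in one essential respect. The paper also starts from the decomposition $\phi=\tilde\phi+h$, but then \emph{resamples} the harmonic part: it introduces an independent copy $\widehat h$ of $h$, sets $\widehat\phi=\tilde\phi+\widehat h$ (equal in law to $\phi$ and independent of $\phi_{K_1}$), rewrites $f_2(\phi)=f_2(\widehat\phi+h-\widehat h)$, and exploits that $|h-\widehat h|\le\delta$ on $K_2$ on the intersection $G_\delta\cap\widehat G_\delta$; the error terms $\IP[G_\delta^c]$ arise from discarding the event $\widehat G_\delta^c$. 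You instead compute the conditional expectation explicitly as the increasing functional $F(h)$ with $F(\eta)=\IE\big(f_2(\tilde\phi+\eta)\big)$, sandwich it on $G_\delta$ between the values $F(\mp\delta/2)$ at constant configurations, and then convert $F(\pm\delta/2)$ back into $\IE\big(f_2(\phi\pm\delta)\big)$ by restricting that expectation to $G_\delta$ once more --- which is where your error term enters. Both routes use the same ingredients (independence of $\tilde\phi$ from $\sigma(\phi_{K_1})$, monotonicity of $f_2$, and the threshold $\delta/2$ splitting the total sprinkling $\delta$ into two halves) and yield identical constants. Your version avoids enlarging the probability space and makes the role of the constant shift $\pm\delta/2$ transparent; the paper's resampling formulation is the one that transfers more readily to settings where the conditional law is not explicitly a deterministic shift of an independent field. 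The point you flag as delicate --- the identity $\IE\big(f_2(\phi)\mid\phi_{K_1}\big)=F(h)$ --- is indeed where measurability must be checked, but it follows from the standard freezing lemma because $\sigma(\phi_{K_1})=\sigma(h)$ and $\tilde\phi$ is independent of $h$; the paper relies on exactly the same fact when it replaces $\IE\big(f_2(\widehat\phi+\delta)\mid\phi_{K_1}\big)$ by $\IE\big(f_2(\widehat\phi+\delta)\big)$ in~\eqref{upperboundI}.
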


An (almost) immediate consequence of this result is the following
\begin{cor}
\label{t_decouple}
Assume that $f_2:\R^{\Z^d}\rightarrow [0,1]$ is increasing and
supported on~$K_2$, and $f_1:\R^{\Z^d}\rightarrow [0,1]$ is 
\emph{any} function supported on~$K_1$. Then for all $\delta>0$
we have
\begin{equation}
\label{uncond_incr}
\IE f_1(\phi)\IE f_2(\phi-\delta)-2\IP[G_\delta^c]\leq 
\IE f_1(\phi)f_2(\phi) \leq \IE f_1(\phi)
\IE f_2(\phi+\delta)+2\IP[G_\delta^c].
\end{equation}
\end{cor}
It is straightforward to see that the corresponding
results (with obvious changes)
 also hold if $f_2$ is a \emph{decreasing} function.

We refer to the quantity~$\delta$ in~\eqref{cond_incr}--\eqref{uncond_incr}
as the amount of sprinkling. In Proposition \ref{p_ocG} below
 we shall see that the term $\IP[G_\delta^c]$
 decreases quite fast as $\dist(K_1,K_2)$ increases;
so one can decrease the correlation term of Proposition \ref{p_simple} at the cost of
``changing the level'' of the field for the monotone function $f_2$.

\medskip

The next proposition tells us how to choose the amount of sprinkling $\delta$
if we want a useful
 upper bound for~$\IP[G_\delta^c]$.
Let us denote by~$|K|$ the cardinality of~$K\subset\Z^d$. 
For any $K \subseteq \Z^d$ we define 
\begin{align*}
 K^{(\geq s)}&=\{y\in\Z^d: \dist(y,K) \geq s\}
 \\
 K^{(= s)}&=\{y\in\Z^d: s\leq \dist(y,K) <s+1\}.
\end{align*}
Note that a nearest-neighbor walk from $K$ to $K^{(\geq s)}$ must pass through~$K^{(= s)}$.

Having fixed the disjoint subsets $K_1$ and $K_2$ of $\Z^d$, let us define $s=\dist(K_1,K_2)>0$ and
the auxiliary sets $H_1,H_2 \subseteq \Z^d$ in the following way:
\begin{itemize}
\item if $\diam(K_1)\leq \diam(K_2)$,  define
 $H_{2}=K_1^{(\geq s)}$ and $H_1=H_{2}^{(\geq s)}$,
\item if $\diam(K_1) > \diam(K_2)$,  define
 $H_{1}=K_2^{(\geq s)}$ and $H_2=H_{1}^{(\geq s)}$.
\end{itemize}
 With these definitions we have
 \begin{equation}\label{auxialiary_set_properties}
  K_i \subseteq H_i, \quad H_{3-i}=H_i^{(\geq s)}, \quad |H_i^{(= s)}|<\infty, \quad i\in\{1,2\}. 
  \end{equation}

Let us define
\[
 g_s = \sup_{y:\, \|y\|\geq s} g(0,y).
\]

\begin{prop}
\label{p_ocG}
Denote by~$s=\dist(K_1,K_2)>0$.
Then
\begin{equation}
\label{eq_ocG}
\IP[G_\delta^c] \leq 
2 |H_1^{(=s)}| \exp\Big(-\frac{\delta^2}{8g_s} \Big).
\end{equation}
\end{prop}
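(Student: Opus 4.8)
The plan is to combine a single-site Gaussian tail estimate with a reduction of the supremum over the (possibly infinite) set $K_2$ to a maximum over the finite boundary layer $H_1^{(=s)}$. First I would record the relevant properties of $h$ at a single site. For fixed $w$, formula \eqref{def_eq_h} writes $h_w=\sum_{y\in K_1}a_y(w)\,\phi_y$ with weights $a_y(w)=P_w[H_{K_1}<\infty,\,X_{H_{K_1}}=y]\ge 0$ summing to $P_w[H_{K_1}<\infty]\le 1$; since $K_1$ is finite, $h_w$ is a finite linear combination of the jointly Gaussian variables $\phi_y$, hence centered Gaussian. From the decomposition \eqref{decomposition} one has $h_w=\IE(\phi_w\mid\phi_{K_1})$, so $\Var(h_w)=\IE(h_w\phi_w)=\sum_{y\in K_1}a_y(w)\,g(w,y)$. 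If $w$ satisfies $\dist(w,K_1)\ge s$, then $\|w-y\|\ge s$ for all $y\in K_1$, whence $g(w,y)\le g_s$ and therefore $\Var(h_w)\le g_s$. The standard Gaussian tail bound then gives
\[
\IP\big[\,|h_w|>\tfrac{\delta}{2}\,\big]\le 2\exp\Big(-\frac{\delta^2}{8\,g_s}\Big)
\qquad\text{whenever }\dist(w,K_1)\ge s.
\]

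The crux of the argument is to avoid a union bound over $K_2$, which may be infinite. Here I would use that, for fixed $\phi_{K_1}$, the map $x\mapsto h_x$ is discrete-harmonic on $\Z^d\setminus K_1$ and vanishes at infinity, so its size on $K_2$ is governed by its values on a separating surface. Concretely, by \eqref{auxialiary_set_properties} we have $K_2\subseteq H_2=H_1^{(\ge s)}$, so every $x\in K_2$ lies at distance at least $s$ from $H_1\supseteq K_1$; hence any nearest-neighbour path from $x$ to $K_1$ must first meet $H_1^{(=s)}$. Writing $T$ for the hitting time of $H_1^{(=s)}$ and applying the strong Markov property at $T$ to the representation \eqref{def_eq_h} yields, for every $x\in K_2$,
\[
h_x=\sum_{w\in H_1^{(=s)}}P_x[\,T<\infty,\,X_T=w\,]\,h_w,
\]
and therefore $|h_x|\le \max_{w\in H_1^{(=s)}}|h_w|$, because the $P_x$-weights are nonnegative and sum to at most $1$. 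Taking the supremum over $x\in K_2$ gives $\sup_{x\in K_2}|h_x|\le \max_{w\in H_1^{(=s)}}|h_w|$, so that $G_\delta^c\subseteq\bigcup_{w\in H_1^{(=s)}}\{|h_w|>\delta/2\}$.

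Finally I would conclude. Every $w\in H_1^{(=s)}$ satisfies $\dist(w,K_1)\ge\dist(w,H_1)\ge s$, so the single-site estimate of the first step applies at each such $w$. Since $|H_1^{(=s)}|<\infty$ by \eqref{auxialiary_set_properties}, a union bound over this finite set gives exactly \eqref{eq_ocG}. The only genuinely nontrivial step is the reduction in the second paragraph, i.e.\ passing from the (possibly infinite) set $K_2$ to the finite separating layer $H_1^{(=s)}$ via harmonicity of $h$; the remaining ingredients are the elementary variance bound $\Var(h_w)\le g_s$ and a standard Gaussian tail inequality.
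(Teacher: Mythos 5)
Your proposal is correct and follows essentially the same route as the paper's proof: a variance bound $\Var(h_w)\le g_s$ for $w\in H_1^{(=s)}$ combined with a Gaussian tail estimate, the reduction of $\sup_{x\in K_2}|h_x|$ to $\max_{w\in H_1^{(=s)}}|h_w|$ via the strong Markov property at the separating layer, and a union bound over the finite set $H_1^{(=s)}$. The only cosmetic difference is that you obtain the identity $\Var(h_w)=\sum_{y\in K_1}a_y(w)g(w,y)$ from the projection property $h_w=\IE(\phi_w\mid\phi_{K_1})$, whereas the paper collapses the double sum directly via the strong Markov property for the Green function.
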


\begin{rmk}\label{remark_similarities}
Let us compare the inequality obtained from the combination of \eqref{uncond_incr} and \eqref{eq_ocG} with the main decoupling result (Theorem~2.1) of~\cite{PopovTeixeira}.
 Observing that $g_s=\mathcal{O}(s^{2-d})$ by \eqref{lawler_green}, one can
note the similarity of the expression in the exponent in the right-hand side
of~\eqref{eq_ocG} with that in the error term
in \cite[Theorem~2.1]{PopovTeixeira}. Also, let~$r$ be the \emph{minimum} of the Euclidean diameters 
 of $K_1$ and $K_2$. Then, one can (very crudely) bound $|H_1^{(=s)}|$
by $\text{const}\times (r+s)^d$ and again observe the similarity with
the error term in \cite[Theorem~2.1]{PopovTeixeira}.
\end{rmk}

Before we prove the results stated above in Section~\ref{s_proofs}, 
we give an application of our decoupling inequality in Section \ref{s_connect}.

\section{Connectivity decay for percolation of excursion sets}
\label{s_connect}

For any $h\in\R$, we define 
 the \textit{excursion set} above level~$h$ as 
\begin{equation*}
E^{\geq h}_\phi = \{x\in\Z^d: \phi_x \geq h\} .
\end{equation*}
We view $E^{\geq h}_\phi$ as a random 
subgraph of~$\Z^d$, and, 
naturally, one may be interested 
 in studying its percolation properties.
Let us write $\{x \xleftrightarrow{\geq h} y\}$ for the 
event when $x,y\in\Z^d$ are connected in $E^{\geq h}_\phi$.
 As an application of Corollary~\ref{t_decouple}, in this section
we establish a result on the decay of the connectivity function of the 
excursion set $E^{\geq h}_\phi$. 

In~\cite{BLM} (for $d=3$) and in~\cite{RodSz} (for all $d \geq 3$) 
it was shown that there exists $h_{*}=h_{*}(d)\in[0,+\infty)$
such that $E^{\geq h}_\phi$ percolates for $h<h_{*}$ and does not 
percolate for $h>h_{*}$; moreover, it was also proved that 
$h_{*}(d)>0$ for all sufficiently large~$d$. 
Further
developments regarding percolation of $E^{\geq h}_\phi$ and its connection to
interlacement percolation (based on the isomorphism theorem \cite{Sznitman_isomorph})
  can be found in~\cite{Lupu,Rod}.

In~\cite[(0.6)]{RodSz} the authors define another critical
parameter $h_{**}\geq h_{*}$ as the threshold above which one 
has at least polynomial decay of the probabilities
of certain crossing events:
\[ h_{**}(d)= \inf \{ h \in \R \; ; \; \text{for some } \alpha>0,
\lim_{L \to \infty} L^{\alpha} \mathbb{P}[ B(0,L) \stackrel{\geq h}{\longleftrightarrow} S(0,2L) ] =0 \},
\]
where the event $\{ B(0,L) \stackrel{\geq h}{\longleftrightarrow} S(0,2L) \}$ refers to the existence of 
a nearest-neighbour path in $E^{\geq h}_\phi$ connecting $B(0,L)$, the ball of radius $L$ around $0$ in the
$\ell^{\infty}$-norm, to $S(0,2L)$, the $\ell^{\infty}$-sphere of radius $2L$ around $0$.

In \cite[Theorem 2.6]{RodSz} they prove that $h_{**}(d)<\infty$ for all $d\geq 3$, and
also give a stretched exponential upper
bound for the connectivity function $\IP[0 \xleftrightarrow{\geq h} x]$
as $\|x\|\to\infty$ for $h>h_{**}$.
In the next theorem, we further weaken the definition of~$h_{**}$ 
and, more importantly, we improve on the stretched exponential
 bound for values of~$h$ above~$h_{**}$.

\begin{theo}
 \label{t:connect}
 For $d \geq 4$, given $h > h_{**}(d)$, there exist positive constants 
 $\gamma_1 = \gamma_1(d,h)$ and $\gamma_2 = \gamma_2(d,h)$ such that
 \begin{equation}
\label{e:connect_d4}
  \IP[0 \xleftrightarrow{\geq h} x] \leq
\gamma_1 \exp \{ -\gamma_2 \|x\|\},
\text{ for every $x \in \Z^d$.}
 \end{equation}
 If $d = 3$ and $h > h_{**}(3)$, then for any $b>1$ there exist
$\gamma'_1 = \gamma'_1(h,b)$ and
$\gamma'_2 = \gamma'_2(h,b)$
such that
 \begin{equation}
\label{e:connect_d3}
  \IP[0 \xleftrightarrow{\geq h} x] \leq
\gamma'_1 \exp \Big\{ -\gamma'_2
\frac{\|x\|}{\log^{3b}\|x\|} \Big\}, \text{ for every $x \in \Z^d$.}
 \end{equation}

 Moreover, we show that the quantity~$h_{**}$ can be defined as
 \begin{equation}
 \label{e:ustarstar3}
  h_{**} = \inf \Big\{ h > 0;\; \liminf_{L \to \infty}
\IP \big[ [0,L]^d \xleftrightarrow{\geq h}
\partial [-L,2L]^d \big] < \frac{7}{2d\cdot 21^d} \Big\}.
 \end{equation}
\end{theo}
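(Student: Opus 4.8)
The plan is to prove all three assertions by a single multiscale renormalization scheme driven by the sprinkled decoupling inequality of Corollary~\ref{t_decouple} together with the quantitative bound of Proposition~\ref{p_ocG}. I would work with a geometric sequence of scales $L_n$ and with the crossing events $A_n^x=\{B(x,L_n)\xleftrightarrow{\geq h}S(x,2L_n)\}$, writing $p_n(h)=\sup_x\IP[A_n^x]$. Since $\{0\xleftrightarrow{\geq h}x\}$ forces a crossing of an annulus at the scale $L_n\asymp\|x\|$, a fast decay of $p_n(h)$ in $n$ will immediately yield~\eqref{e:connect_d4}--\eqref{e:connect_d3} after interpolating over the scales with $L_n\le\|x\|<L_{n+1}$.

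The core is a one-step recursion. Geometrically, the occurrence of a scale-$(n+1)$ crossing forces two scale-$n$ crossings in sub-boxes at distance of order $L_n$ from one another; since these crossing events are \emph{increasing} in $\phi$, I would apply Corollary~\ref{t_decouple} (with $K_1,K_2$ the two sub-boxes and $\delta=\delta_n$) together with the monotonicity $p_n(h)\le p_n(h-\delta_n)$ of crossing probabilities in the level, obtaining a bound of the form
\[
p_{n+1}(h)\;\le\;N_d\,p_n(h-\delta_n)^2\;+\;2\,\IP[G_{\delta_n}^c],
\]
where $N_d$ counts the admissible pairs of sub-boxes. Unrolling this from scale $n+1$ down to a base scale evaluates the base crossing at the lowered level $h-\sum_k\delta_k$, so I would fix $h'\in(h_{**},h)$, allocate a summable sprinkling budget $\sum_k\delta_k<h-h'$, and use the definition of $h_{**}$ to guarantee that at level $h'>h_{**}$ the base crossing probability lies below the ignition threshold $1/N_d$ once the base scale is large. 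Writing $v_n=N_d\,p_n$, the recursion reads $v_{n+1}\le v_n^2+(\text{error})$, which for an ignited base ($v_{n_0}<1$) produces doubly-exponential decay $v_n\lesssim v_{n_0}^{2^{\,n-n_0}}$ provided the accumulated error terms are controlled. For the reformulation~\eqref{e:ustarstar3} I would compute $N_d$ explicitly for the specific geometry $[0,L]^d\subset[-L,2L]^d$ (scale ratio $3$), identifying the ignition threshold with $\tfrac{7}{2d\cdot21^d}$, and then argue both inclusions: a single scale below this threshold ignites the recursion and forces super-polynomial decay, hence $h\ge h_{**}$, while conversely polynomial decay at level $h>h_{**}$ makes the $\liminf$ vanish, hence fall below the threshold.

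The decay rate is dictated by the slower of the doubly-exponential crossing term and the decoupling error. Using $g_s=\mathcal{O}(s^{2-d})$ from~\eqref{lawler_green} and the crude bound $|H_1^{(=s)}|\lesssim L_n^{\,d}$, Proposition~\ref{p_ocG} gives $\IP[G_{\delta_n}^c]\lesssim L_n^{\,d}\exp(-c\,\delta_n^2 L_n^{\,d-2})$. For $d\ge4$ I can take $L_n$ and $\delta_n$ both geometric with $\delta_n\downarrow0$ summable, and the exponent $\delta_n^2 L_n^{\,d-2}$ still grows faster than $L_n$, so the error is super-exponentially small in $L_n$ and the genuine exponential bound~\eqref{e:connect_d4} survives. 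For $d=3$ the exponent is only $\delta_n^2 L_n$, and keeping $\sum_k\delta_k$ finite forces $\delta_n^2\asymp\log^{-3b}L_n$; this is exactly what degrades the rate to $\exp\{-c\,\|x\|/\log^{3b}\|x\|\}$ in~\eqref{e:connect_d3}, the exponent $3b$ absorbing the logarithmic losses from summability of the sprinkling, the combinatorial prefactors, and the interpolation between successive scales.

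The main obstacle I expect is the delicate bookkeeping in the $d=3$ regime: because the Green function decays as slowly as $s^{-1}$, the error term $\IP[G_{\delta_n}^c]$ is no longer negligible against the length scale unless the sprinkling is tuned to decay just fast enough to remain summable yet slowly enough (a polylogarithmic, non-geometric schedule) to keep the error super-polynomially small. Verifying that the accumulated errors do not overwhelm the doubly-exponential factor in the recursion, and pinning down the precise power $3b$, is where the real work lies; by comparison, the geometric choices for $d\ge4$ and the explicit computation of the constant $\tfrac{7}{2d\cdot21^d}$ are routine once the recursion is in place.
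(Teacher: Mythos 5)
Your proposal is correct and follows essentially the same route as the paper: the paper derives the sprinkled decoupling inequality \eqref{strongdecoupling} for increasing crossing events from Corollary~\ref{t_decouple} and Proposition~\ref{p_ocG}, and then invokes the multiscale renormalization of Popov--Teixeira, which is exactly the recursion $p_{n+1}\le N_d\,p_n(h-\delta_n)^2+\mathrm{err}_n$ with a summable sprinkling budget, ignition below $1/N_d$ at a base scale via the definition of $h_{**}$, and the $\delta_n^2 s^{d-2}$ versus $L_n$ comparison that produces exponential decay for $d\ge 4$ and the $\log^{3b}$ correction for $d=3$ that you describe. The only difference is one of presentation: the paper outsources the renormalization bookkeeping to \cite[Theorem~3.1 and Remark~3.4]{PopovTeixeira}, whereas you sketch it explicitly.
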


\begin{proof}[Proof of Theorem~\ref{t:connect}]
The reader may have noticed that the above result 
is a copy of the statement of \cite[Theorem~3.1]{PopovTeixeira}, 
with obvious notational changes.
Indeed, as observed in \cite[Remark~3.4]{PopovTeixeira}, 
the proof of that theorem can be adapted to \emph{any}
percolation model which satisfies certain monotonicity and 
decoupling properties, that the excursion sets of the
GFF do possess. 

More specifically, let us denote by $\IP_h$ the law 
of the excursion set $E^{\geq h}_\phi$.
 Then~\eqref{uncond_incr} and~\eqref{eq_ocG}
imply that for any increasing events $A_1,A_2$
 that depend on disjoint boxes of
size~$r$ within distance at least~$s$ from each other,
we have
\begin{equation}
\label{strongdecoupling}
\IP_h[A_1\cap A_2] \leq \IP_{h-\delta}[A_1]
 \IP_{h-\delta}[A_2]
    + C (r+s)^d 
 \exp(-C'\delta^{2} s^{d-2}).
\end{equation}
This decoupling inequality is a special case of the one in~\cite[Remark 3.4]{PopovTeixeira}, 
the proof of this result is also practically a copy
of the proof of~\cite[Theorem~3.1]{PopovTeixeira}.
\end{proof}

It is important to observe that~\eqref{strongdecoupling} may be seen as
a partial replacement of the BK inequality, which is very useful for
proving exponential decay of crossing probabilities in the subcritical phase of classical (Bernoulli) percolation,
see e.g.\ Section~5.2 of~\cite{Grimmett}. While the ``pure''
BK inequality generally does not hold in the dependent percolation models
we mentioned here, inequalities similar to~\eqref{strongdecoupling}
are still very useful, even though they usually involve sprinkling
and the additive ``error'' term. It turns out that, for renormalization
arguments, this additive term is pivotal, in the 
sense that the smaller it is, the better results one obtains for the 
decay of the probabilities
of crossing events. In fact one can even achieve exponential decay (if $d \geq 4$)
 in polynomially correlated percolation models without the BK inequality.

Another important observation is that the question whether $h_*=h_{**}$
is still open (as well as the corresponding question for random interlacements).
In our opinion, the conditional
decoupling result of our Theorem \ref{t_cond}
might help in proving it; in fact, it did help in the proof of the fact that
$h_*(d)/h_{**}(d) \to 1$ as $d\to\infty$, see~\cite{DreRod}.

\section{Proofs of the decoupling results}
\label{s_proofs}
We start by deducing the correlation bounds with no sprinkling from
the corresponding general results of \cite[Chapter 10]{Janson}.
\begin{proof}[Proof of Proposition~\ref{p_simple}]
First note that a function $f: \R^{\Z^d} \rightarrow \R$
is supported on~$K$ if and only if $f$ is $\sigma(\phi_{K})$-measurable.

Denote by $\mathbf{G}$ the Gaussian Hilbert space that arises as the closure of vector space of linear combinations of
 $\varphi_x, \; x \in \Z^d$ under the norm $\Vert X \Vert= \sqrt{ \langle X, X \rangle }$ given by the
  inner product $\langle X,Y \rangle =\mathrm{Cov}(X,Y)$.

 Denote by $\mathbf{H}$ and $\mathbf{K}$ the subspaces of $\mathbf{G}$ spanned by
  linear combinations of $\varphi_x, \; x \in K_1$
 and  $\varphi_x, \; x \in K_2$, respectively.
Let us introduce the sigma-algebras $\mathcal{F}=\sigma(\mathbf{H})=\sigma(\varphi_{K_1})$ and 
$\mathcal{G}=\sigma(\mathbf{K})=\sigma(\varphi_{K_2})$.

  Recall from \cite[Definitions 10.5, 10.6]{Janson} the notion of the strong mixing coefficient
 $\alpha(\cdot,\cdot)$ and the maximal correlation coefficient $\rho(\cdot,\cdot)$:
  \begin{align*}
\alpha(\mathbf{H},\mathbf{K}) = \alpha(\mathcal{F},\mathcal{G}) &= \sup_{A \in \mathcal{F}, B \in \mathcal{G}} 
 | \mathbb{P}[A \cap B] - \mathbb{P}[A] \mathbb{P}[B] |, \\
\rho(\mathbf{H},\mathbf{K}) = \rho(\mathcal{F},\mathcal{G})
 &= \sup_{X \in L^2(\mathcal{F}), \, Y \in L^2(\mathcal{G})} 
\frac{ \mathrm{Cov}(X,Y)}{ \sqrt{ \mathrm{Var}(X) \mathrm{Var}(Y) } }.
 \end{align*}
With this notation we have 
\begin{equation}\label{squeeze}
 \alpha(\mathbf{H},\mathbf{K})
  \stackrel{(*)}{\leq}
\sup_{f_1, f_2} \mathrm{Cov}\big(f_1(\varphi), f_2(\varphi)\big) 
\stackrel{(**)}{\leq} \rho(\mathbf{H},\mathbf{K}),
\end{equation}
where the supremum is taken over $[0,1]$-valued functions $f_1, f_2: \R^{\Z^d} \rightarrow [0,1] $, and
where $f_1$ is supported on $K_1$ and $f_2$ is supported on $K_2$. Indeed, $(*)$ follows if we choose
$f_1=\I{A}$ and $f_2= \I{B}$ (or $f_2= \I{B^c}$ if $\mathrm{Cov}\big( \I{A}, \I{B} \big)<0$) and $(**)$ follows because 
$\mathrm{Var}(f_1),\mathrm{Var}(f_2) \leq 1$.

Denote by $P_{\mathbf{K}}: \mathbf{G} \to \mathbf{K}$ the orthogonal projection to the subspace $\mathbf{K}$ and by 
$P_{\mathbf{HK}}: \mathbf{H} \to \mathbf{K}$ the restriction of $P_{\mathbf{K}}$ to $\mathbf{H}$. 
Similarly, let $P_{\mathbf{H}}$ denote the projection to $\mathbf{H}$ and $P_{\mathbf{KH}}$ its restriction to $\mathbf{K}$.
Denote by $\Vert  \cdot  \Vert$ the operator norm of linear operators on (subspaces of) $\mathbf{G}$.

 \cite[Theorem 10.11]{Janson} states that
 $\rho(\mathbf{H},\mathbf{K})=\Vert P_{\mathbf{HK}} \Vert$ and  \cite[Remark 10.1(ii), Theorem 10.13]{Janson} imply that
 $\frac{1}{2\pi}\Vert P_{\mathbf{HK}} \Vert \leq \alpha(\mathbf{H},\mathbf{K})$. 
 Combining these results with~\eqref{squeeze} we see that in order to prove
 Proposition~\ref{p_simple}  we only need to  show that there exist constants $0<c'_d \leq C'_d < +\infty$ 
that depend only on $d$ such that if $K_1, K_2$ satisfy \eqref{K_1_K_1_far} then we have 
\begin{equation}\label{rewritten_cov_norm}
c'_d 
\frac{\big(\mathrm{cap}(K_1)\mathrm{cap}(K_2)\big)^{1/2}}{\dist(K_1,K_2)^{d-2}} \leq
\Vert P_{\mathbf{HK}} \Vert
\leq C'_d 
\frac{ \big(\mathrm{cap}(K_1) \mathrm{cap}(K_2)\big)^{1/2}}{ \dist(K_1,K_2)^{d-2} }.
\end{equation}
First note  that the adjoint of $P_{\mathbf{HK}}$ is $P_{\mathbf{KH}}$ (see \cite[Remark 10.1]{Janson}), thus if we define
$A: \mathbf{H} \to \mathbf{H}$ by $A=P_{\mathbf{KH}} P_{\mathbf{HK}}$, then $A$ is self-adjoint and
we  have (see, for example, \cite[Appendix H]{Janson}) 
\begin{equation}\label{norms_from_janson}
 \Vert P_{\mathbf{KH}} \Vert=  \Vert P_{\mathbf{HK}} \Vert= \sqrt{\Vert A \Vert} .
\end{equation}

Note that we have
\begin{equation}\label{norm_corr}
 \Vert P_{\mathbf{HK}} \Vert= 
  \sup_{X \in \mathbf{H},  Y \in \mathbf{K}}
 \frac{\langle  P_{\mathbf{K}} (X), Y \rangle}{  \Vert X \Vert  \Vert Y \Vert  } = 
 \sup_{ X \in \mathbf{H}, Y \in \mathbf{K}} 
 \frac{ \langle X,Y \rangle  }{  \Vert X \Vert  \Vert Y \Vert  } .
\end{equation} 
Now $X \in \mathbf{H}$ if and only if $X = \sum_{x \in K_1} \alpha_x \varphi_x$ for some $\alpha \in \R^{K_1}$ and
$Y \in \mathbf{K}$ if and only if $Y = \sum_{y \in K_2} \beta_y \varphi_y$ for some $\beta \in \R^{K_2}$, thus 
we can use this coordinatization and $\mathrm{Cov}[\phi_x\phi_y] = g(x,y)$ to write
\begin{align}
\label{cov_green}
\langle X, Y \rangle= \mathrm{Cov}(X,Y) &= \sum_{x \in K_1, \, y \in K_2} \alpha_x \beta_y g(x,y), \\
 \label{var_green}
  \Vert X \Vert^2 = \mathrm{Var}(X) &=  \sum_{x \in K_1, \, y \in K_1} \alpha_x \alpha_y g(x,y).
 \end{align}
Also note that  $P_{\mathbf{H}}(\varphi_x)\stackrel{\eqref{def_eq_h}}{=}h_x, \, x \in \Z^d$ and that
an analogous formula holds for $P_{\mathbf{K}}(\varphi_x)$. In particular, the entries of the matrices of 
$P_{\mathbf{KH}}$, $P_{\mathbf{HK}}$ and $A=P_{\mathbf{KH}} P_{\mathbf{HK}}$ 
(expressed in the basis $\varphi_x, \, x \in \Z^d$) are all non-negative,
thus we can use the Perron-Frobenius theorem to infer 
that the self-adjoint matrix~$A$ has an eigenvector 
$X_* \in \mathbf{H}$, $\Vert X_* \Vert=1$
such that $\langle X_*, A X_* \rangle = \Vert A \Vert$ and~$X_*$  
has non-negative coordinates 
in the basis $\varphi_x, \, x \in K_1$. We claim that if we define $Y_*=P_{\mathbf{HK}} X_*$, then
the pair $(X_*,Y_*)$ maximizes the correlation functional on the right-hand side of \eqref{norm_corr}:
\begin{gather*}
\Vert Y_* \Vert = \sqrt{ \langle P_{\mathbf{HK}} X_*, P_{\mathbf{HK}} X_*  \rangle   }=
\sqrt{ \langle  X_*, P_{\mathbf{KH}} P_{\mathbf{HK}} X_*  \rangle   }
= \sqrt{\Vert A \Vert} \stackrel{\eqref{norms_from_janson}  }{=}
 \Vert P_{\mathbf{HK}}\Vert, \\
 \frac{ \langle X_*,Y_* \rangle  }{  \Vert X_* \Vert  \Vert Y_* \Vert  } = 
 \frac{ \langle X_*,  P_{\mathbf{HK}} X_* \rangle }{ \Vert P_{\mathbf{HK}}\Vert }=
 \frac{ \langle P_{\mathbf{HK}}  X_*,  P_{\mathbf{HK}} X_* \rangle }{ \Vert P_{\mathbf{HK}}\Vert }=
 \frac{ \Vert A \Vert  }{ \Vert P_{\mathbf{HK}}\Vert } \stackrel{\eqref{norms_from_janson}  }{=}
 \Vert P_{\mathbf{HK}}\Vert.
\end{gather*}
We can thus infer that
the maximum in \eqref{norm_corr} remains unchanged if we assume 
 \begin{equation}\label{sum_equals_1}
\alpha_x \geq 0, \, x \in K_1, \quad \beta_y \geq 0,\, y \in K_2, \quad
  \sum_{x \in K_1} \alpha_x=1 \quad \text{and} \quad
  \sum_{y \in K_2} \beta_y=1.
\end{equation}  
   Using these assumptions we can bound 
   \begin{multline}\label{green_cov_ineqs}
    c'_d  \dist(K_1,K_2)^{2-d} \stackrel{ \eqref{lawler_green} , \eqref{K_1_K_1_far} }{\leq} 
     \min_{x \in K_1,\, y \in K_2} g(x,y)
   \stackrel{ \eqref{cov_green}, \eqref{sum_equals_1} }{\leq} 
   \langle X,Y \rangle \\
   \stackrel{ \eqref{cov_green}, \eqref{sum_equals_1} }{\leq} 
    \max_{x \in K_1,\, y \in K_2} g(x,y) \stackrel{ \eqref{lawler_green}  }{\leq} 
      C'_d  \dist(K_1,K_2)^{2-d}.
   \end{multline}
   A combination of
  \eqref{var_green} and the 
   variational characterization \cite[Lemma~2.3]{JainOrey} of capacity    gives
\begin{equation}\label{variational}
 \sup_{\alpha} \frac{1}{ \Vert X \Vert^2 } = \mathrm{cap}(K_1), \quad
  \sup_{\beta} \frac{1}{ \Vert Y \Vert^2 } = \mathrm{cap}(K_2),
 \end{equation}
where the maximum is taken over all $\alpha \in \R^{K_1}$ and 
$\beta \in \R^{K_1}$ satisfying~\eqref{sum_equals_1}. Putting together  \eqref{norm_corr}, \eqref{green_cov_ineqs} and \eqref{variational}
 we arrive at \eqref{rewritten_cov_norm}. The proof of Proposition~\ref{p_simple} is complete.

\end{proof}

Now we prove the conditional decoupling result:
\begin{proof}[Proof of Theorem~\ref{t_cond}] 
Recall the decomposition  $\phi={\tilde \phi}+h$ from \eqref{decomposition}.
 Let us assume without loss of generality that our
probability space is rich enough to carry an independent copy~$\widehat h$
of the field~$h$. Denote 
\[{\widehat\phi}={\tilde\phi}+{\widehat h}.\]
Clearly, $\phi$ and ${\widehat\phi}$ have the same law. 
Let ${\widehat G}_\delta$ be the event defined as in~\eqref{def_G},
but with ${\widehat h}$ replacing~$h$.
Then, write
\begin{align}
 \IE\big(f_2(\phi)\mid \phi_{K_1}\big)\I{G_\delta}
   &= \IE\big(f_2({\tilde\phi}+h)\mid \phi_{K_1}\big)\I{G_\delta}\nonumber\\
   &=\IE\big(f_2({\widehat\phi}+h-{\widehat h})\mid
 \phi_{K_1}\big)\I{G_\delta}\nonumber\\
   &= \IE\big(f_2({\widehat\phi}+h-{\widehat h})\I{G_\delta\cap {\widehat G}_\delta}
  \mid \phi_{K_1}\big)\nonumber\\
& ~~~~~~~ +  \IE\big(f_2({\widehat\phi}+h-{\widehat h})
\I{G_\delta\cap {\widehat G}_\delta^c}
  \mid \phi_{K_1}\big)\nonumber\\
  &=: T_1 + T_2.
\label{cond_calculation}
\end{align}
Clearly, we have
\begin{equation}
\label{boundII}
 0\leq T_2 \leq \IP[{\widehat G}_\delta^c]\I{G_\delta}.
\end{equation}
Since, by construction, $\widehat \phi$ is independent of~$\phi_{K_1}$, moreover
$|h-{\widehat h}|\leq \delta$ on $G_\delta\cap {\widehat G}_\delta$ 
and~$f_2$ is increasing, we can write
\begin{align}
 T_1 &\leq \IE\big(f_2({\widehat\phi}+\delta)\I{G_\delta\cap {\widehat G}_\delta}
  \mid \phi_{K_1}\big)\nonumber\\
  &\leq \IE\big(f_2({\widehat\phi}+\delta)\mid \phi_{K_1}\big)
\I{G_\delta}\nonumber\\
  &= \IE\big(f_2({\widehat\phi}+\delta)\big)\I{G_\delta}.
\label{upperboundI}
\end{align}
Also, we have
\begin{align}
 T_1 &\geq \IE\big(f_2({\widehat\phi}-\delta)\I{{\widehat G}_\delta}
  \mid \phi_{K_1}\big)\I{G_\delta}\nonumber\\
  &=\IE\big(f_2({\widehat\phi}-\delta)(1-\I{{\widehat G}_\delta^c})
  \big)\I{G_\delta}\nonumber\\
  &\geq \IE\big(f_2({\widehat\phi}-\delta)\big)\I{G_\delta} 
  - \IP[{\widehat G}_\delta^c]\I{G_\delta}.
\label{lowerboundI}
\end{align}
Inserting \eqref{boundII}--\eqref{lowerboundI} into~\eqref{cond_calculation}
and using the fact that $\widehat \phi$ and~$\phi$ are equally distributed and 
$\IP[{\widehat G}_\delta^c]=\IP[G_\delta^c]$, we conclude the proof 
of Theorem~\ref{t_cond}.
\end{proof}

\begin{proof}[Proof of Corollary~\ref{t_decouple}]
 Now, let~$f_1:\R^{\Z^d}\rightarrow [0,1]$ be a function supported on~$K_1$.
Since
\[
 \IE f_1(\phi) - \IP[G_\delta^c] \leq \IE\big( f_1(\phi)\I{G_\delta}\big)
 \leq  \IE f_1(\phi),
\]
it is then straightforward to obtain~\eqref{uncond_incr} by
 multiplying~\eqref{cond_incr} by~$f_1(\phi)$ and integrating.
\end{proof}

\begin{proof}[Proof of Proposition~\ref{p_ocG}]

Define the events
 \[
  \Lambda_{\delta,x} = \{|h_x|\leq \delta/2\}, \qquad x\in H_1^{(=s)} .
 \]
Clearly, $h_x$ is a centered Gaussian random variable, thus we can use \eqref{def_eq_h} and
$\mathbb E [\phi_x\phi_y] = g(x,y)$ to
 bound the variance of $h_x$, $x\in H_1^{(=s)}$:
\begin{align*}
 \Var h_x &= \sum_{y\in K_1} P_x [ H_{ K_1} < \infty, \, 
 X_{H_{K_1}} =y ]
   \sum_{z\in  K_1} P_x [ H_{ K_1} < \infty, \, 
 X_{H_{K_1}} =z ] g(z,y)\\
   & \stackrel{(*)}{=} \sum_{y\in  K_1} P_x [ H_{ K_1} < \infty, \, 
 X_{H_{K_1}} =y ] g(x,y)
   \leq \sup_{y\in  K_1} g(x,y)
   \stackrel{ \eqref{auxialiary_set_properties} }{\leq} g_s,
\end{align*}
where $(*)$ holds by  the strong Markov property of simple random walk:
\begin{multline*}
 g(x,y)= E_x \left[\, \sum_{n=0}^{\infty} \I{[X_n=y]} \, \right] =
 E_x \left[\, \sum_{n=H_{K_1}}^{\infty} \I{[X_n=y]} \, \right]=
 \\
  E_x \left( g(X_{H_{K_1}},y); \;H_{ K_1} < \infty \right)=
  \sum_{z\in  K_1} P_x [ H_{ K_1} < \infty, \, 
 X_{H_{K_1}} =z ] g(z,y).
 \end{multline*}

Thus we can use the exponential Chebyshev's inequality to bound
\[
 \IP[\Lambda_{\delta,x}^c] 
 \leq 
 2\exp\Big(-\frac{\delta^2}{8g_s} \Big).
\]
Observe that by \eqref{auxialiary_set_properties} any nearest-neighbor walk from $K_2$ to $K_1$ must pass through $H_1^{(= s)}$, so
 by the strong Markov property of the simple random walk on $\Z^d$ and \eqref{def_eq_h},
 for any $y\in K_2$, the value of $h_y$ is a weighted sum of the values
$\big(h_x, x\in H_1^{(=s)}\big)$, with total weight at most~$1$. In particular, we have
\[G_\delta^c \subseteq \bigcup_{x\in H_1^{(=s)}} \Lambda_{\delta,x}^c.\]
 Using the union bound we conclude
the proof of Proposition~\ref{p_ocG}. 
\end{proof}

\section*{Acknowledgements} The work of Serguei Popov was partially
supported by CNPq (300328/2005--2) and FAPESP (2009/52379--8).
The work of Bal\'azs R\'ath is partially supported by OTKA (Hungarian
National Research Fund) grant K100473, the Postdoctoral Fellowship of
the Hungarian Academy of Sciences and the Bolyai Research Scholarship
of the Hungarian Academy of Sciences.

The authors also thank the organizers of the conference 
\textit{Random Walks: Crossroads
and Perspectives} (Budapest, June 24--28, 2013),
for providing the opportunity for the authors to meet 
and work on this topic. 
This paper was written while B.R.\ was a postdoctoral fellow of the University of British Columbia.

We thank Prof.\ Alain-Sol Sznitman for pointing out the reference \cite{Janson} to us, moreover
 Pierre-Fran\c cois Rodriguez, Alexander Drewitz and a very thorough anonymous referee for 
reading and commenting on the manuscript.

\end{document}